\numberwithin{equation}{section}\theoremstyle{plain}
\newtheorem{theorem}{Theorem}[section]
\newtheorem{lemma}[theorem]{Lemma}
\address{\begin{center}{\small Department of Mathematics and Computer Sciences, Faculty of Sciences,\\
Equipe d'Analyse Harmonique et Probabilit\'{e}s, University Moulay Isma\"{\i}l,\\
BP 11201 Zitoune, Meknes, Morocco}
\end{center}}
\begin{document}

\title[A heat kernel version of Miyachi's Theorem for the Laguerre hypergroup]
{A heat kernel version of Miyachi's Theorem for the Laguerre hypergroup}

\author[S.Fahlaoui and M.El kassimi]{ S.Fahlaoui \quad and \quad M.El kassimi}

\address{Sa\"{\i}d Fahlaoui}  \email{s.fahlaoui@fs.umi.ac.ma}

\address{Mohammed El kassimi} \email{m.elkassimi@edu.umi.ac.ma}

\maketitle
\begin{abstract}
Let $\mathbb{K}=[0,+\infty[\times\mathbb{R}$ the Laguerre Hypergroup.
In this paper, we are going to formulate and  prove  an analogue  of Miyachi's uncertainty principle  for the  Laguerre-Hypergroup Fourier transform. Our version will be in terms of the heat kernel associated to the radial part of the sub-Laplacian on the Heisenberg group.
\end{abstract}

{\it keywords} Miyachi's theorem, Laguerre-Hypergroup, Fourier Laguerre transform\\

2010 MATHEMATICS Subject Classification 42A38; 42B10; 43A32

\section{Introduction}

   A wonderful aspect of  quantum physics is that, we cannot measure  the position and momentum of a particle simultaneously with high precision.  The  mathematical formulation of this rule is that, we cannot at the same time localize the value of a function and its Fourier transform. There are many formulations of this idea, previously developed by Heisenberg in 1927 \cite{Heis}. Later, in 1933 Hardy have obtained  a new formulation of this principle \cite{Har}. After, in 1997 Miyachi \cite {Miya} proved the following theorem for the real line:

\begin{theorem}
Let $f$ be an integrable function on $\mathbb{R}$ such that
$$e^{ax^2}f \in L^1(\mathbb{R}) + L^{\infty}(\mathbb{R}),$$

 Further assume that
$$\int_\mathbb{R}\log^+(\frac{|e^{b\lambda^2}\widehat{f}(\lambda)|}{\delta})d\lambda<\infty,$$

for some   numbers $a, b, \delta>0$.\\
~~$(i)$ if $ab>\frac{1}{4}$, then $f=0$ a.e.\\
~$(ii)$If $ab = \frac{1}{4}$, then f is a constant multiple of the gaussian $e^{-ax^2}$.

\end{theorem}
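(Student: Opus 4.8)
The plan is to reduce the statement to a Phragm\'en--Lindel\"of property of a single entire function, obtained by encoding both hypotheses into its growth along the real and the imaginary axes.

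First I would promote $\widehat f$ to an entire function. Writing $f=f_1+f_2$ with $e^{ax^2}f_1\in L^1(\mathbb R)$ and $e^{ax^2}f_2\in L^\infty(\mathbb R)$, the Gaussian decay lets me extend $\widehat f$ to $\mathbb C$ by $\widehat f(z)=\int_{\mathbb R}f(x)e^{-ixz}\,dx$. Estimating $|e^{-ixz}|=e^{x\,\mathrm{Im}\,z}$ against $e^{-ax^2}$ and completing the square yields the order-two bound
\[
|\widehat f(x+iy)|\le C\,e^{y^2/(4a)},\qquad x,y\in\mathbb R .
\]
Next I introduce the auxiliary entire function $G(z)=e^{bz^2}\,\widehat f(z)$ and read off its size on the two axes. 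On the real axis $|G(\lambda)|=e^{b\lambda^2}|\widehat f(\lambda)|$, so the hypothesis is exactly $\int_{\mathbb R}\log^+(|G(\lambda)|/\delta)\,d\lambda<\infty$. On the imaginary axis, $|e^{b(iy)^2}|=e^{-by^2}$ together with the bound above gives $|G(iy)|\le C\,e^{(1/(4a)-b)y^2}$, which is bounded whenever $ab\ge\tfrac14$. Combining the two estimates throughout the plane produces the one-sided growth
\[
|G(z)|\le C\,e^{b(\mathrm{Re}\,z)^2},
\]
so that $G$ is uniformly bounded on every vertical line and bounded on the imaginary axis.

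The core is then the following assertion, which I would isolate as a lemma: an entire function $G$ satisfying $|G(z)|\le Ce^{b(\mathrm{Re}\,z)^2}$ together with $\int_{\mathbb R}\log^+|G(\lambda)|\,d\lambda<\infty$ must be constant. To prove it I would symmetrize, setting $\Psi(w)=G(\sqrt w)\,G(-\sqrt w)$; since $G(z)G(-z)$ is even it descends to an entire function of $w=z^2$, and the bound above becomes $|\Psi(w)|\le C^2 e^{b(|w|+\mathrm{Re}\,w)}$, so $\Psi$ has exponential type and vanishing growth on the negative real axis. The log-integral for $G$ transfers to $\int_0^\infty \log^+|\Psi(t)|\,t^{-1/2}\,dt<\infty$, which in particular places $\Psi$ in the Cartwright class. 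The indicator of a Cartwright function is sinusoidal, and comparison with the bound $b(1+\cos\theta)$ forces the indicator to vanish identically; a zero-type Cartwright function with finite logarithmic integral is bounded on $\mathbb R$, hence bounded on $\mathbb C$ and constant by Liouville. Finally, $\Psi\equiv K$ means either $G\equiv0$, or $G=e^{P}$ with $\deg P\le2$, and then the vertical-line boundedness kills the quadratic and the imaginary-linear parts of $P$ while the real-axis log-integral kills the real-linear part, leaving $G$ constant. This borderline Phragm\'en--Lindel\"of estimate --- order exactly two in a sector of opening $\pi/2$, where the naive maximum principle fails precisely because of the Gaussian --- is the step I expect to be the main obstacle.

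It remains to translate back. In the critical case $ab=\tfrac14$ the lemma gives $\widehat f(\lambda)=c\,e^{-b\lambda^2}$, and Fourier inversion of a Gaussian yields $f(x)=c'\,e^{-x^2/(4b)}=c'\,e^{-ax^2}$, which is case $(ii)$. For case $(i)$, $ab>\tfrac14$, I would write $b=b_0+\varepsilon$ with $ab_0=\tfrac14$ and apply the critical case to obtain $\widehat f(\lambda)=c\,e^{-b_0\lambda^2}$; then $e^{b\lambda^2}\widehat f(\lambda)=c\,e^{\varepsilon\lambda^2}$, whose logarithmic integral diverges unless $c=0$, forcing $\widehat f\equiv0$ and hence $f=0$ a.e.
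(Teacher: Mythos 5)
Your overall reduction is the standard one and is carried out correctly: the Gaussian decay gives an entire extension with $|\widehat f(x+iy)|\le Ce^{y^2/(4a)}$, the auxiliary function $G(z)=e^{bz^2}\widehat f(z)$ then satisfies $|G(z)|\le Ce^{b(\mathrm{Re}\,z)^2}$ for $ab\ge\frac14$ together with the finite logarithmic integral on $\mathbb R$, and your endgame (case $(ii)$ from constancy of $G$, case $(i)$ from case $(ii)$) is fine. Note that the paper never proves this theorem: it quotes it from Miyachi, and the entire-function statement you still owe is exactly the paper's Lemma \ref{miya0}, which the paper also cites without proof. So everything hinges on your Cartwright-class proof of that lemma, and that proof breaks at the very step you flagged as the main obstacle. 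The assertion ``a zero-type Cartwright function with finite logarithmic integral is bounded on $\mathbb R$, hence constant'' is false: the function $\cos\sqrt z$ (entire, since cosine is even) has order $\frac12$, hence exponential type zero and identically vanishing indicator, and it satisfies $\int_{\mathbb R}\frac{\log^+|\cos\sqrt t\,|}{1+t^2}\,dt<\infty$ because it is bounded on $[0,\infty)$ and grows only like $e^{\sqrt{|t|}}$ on $(-\infty,0]$; yet it is unbounded on $\mathbb R$ and not constant.

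The gap is structural rather than a missing citation. After symmetrization your $\Psi$ satisfies (a) $|\Psi(w)|\le C^2e^{b(|w|+\mathrm{Re}\,w)}$ and (b) $\int_0^\infty\log^+|\Psi(t)|\,t^{-1/2}\,dt<\infty$, and from these you deduce exponential type, boundedness on the negative axis, Cartwright membership, and zero indicator. But $\Psi_0(w)=\cosh\sqrt w$ satisfies (a) verbatim: since $(\mathrm{Re}\sqrt w)^2=\frac12(|w|+\mathrm{Re}\,w)$, one gets $|\Psi_0(w)|\le e^{1/(8b)}e^{b(|w|+\mathrm{Re}\,w)}$; it is bounded on the negative axis, lies in the Cartwright class, and has indicator identically zero --- and it is not constant. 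The only hypothesis separating $\Psi$ from $\Psi_0$ is the $t^{-1/2}$-weighted integral (b), which diverges for $\Psi_0$ because $\log^+|\Psi_0(t)|\sim\sqrt t$. Your argument, however, uses (b) only once, to conclude Cartwright membership, for which the far weaker weight $(1+t^2)^{-1}$ already suffices; every deduction after that point applies equally to $\Psi_0$, so the chain of reasoning would ``prove'' that $\cosh\sqrt w$ is constant and therefore cannot be valid. A correct proof must exploit the full strength of the unweighted integral $\int\log^+|G|\,dt<\infty$ beyond Cartwright membership --- that is precisely the content of Miyachi's lemma (the paper's Lemma \ref{miya0}), and order-one indicator theory applied to the symmetrized function cannot recover it.
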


  For the Laguerre-Hypergroup Fourier transform in \cite{Huang} H.Jizheng and L.Heping proved the Hardy's theorem, and in \cite{Huang2} they demonstrated Beurling's theorem. In this paper we are going to give a version of Miyachi's theorem for the Laguerre-Hypergroup.

\section{Harmonic Analysis for Laguerre Hypergroup}

We consider the following partial differential operator\\
$$\left\{
  \begin{array}{ll}
    D=\frac{\partial}{\partial x},  \\
    \mathcal{L}=\frac{\partial^2}{\partial x^2}+\frac{2\alpha+1}{x}\frac{\partial}{\partial x}+x^2\frac{\partial^2}{\partial t^2}, \\
    (x,t)\in[0,+\infty[ \times \mathbb{R}~ and~ \alpha\in [0,+\infty[.
  \end{array}
\right.$$
For $\alpha=n-1$, $n\in\mathbb{N}^{*}$, the operator $\mathcal{L}$ is the radial part of the sub-Laplacian on the Heisenberg group $\mathbb{H}_{n}$.

For $(\lambda,m)\in \mathbb{R}\times\mathbb{N}$, the initial value problem

 $$\left\{
         \begin{array}{ll}
         Du=i\lambda u, \\
         \mathcal{L} u=-4|\lambda|(m+\frac{\alpha+1}{2})u,  \\
         u(0,0)=1, \frac{\partial u}{\partial x}(0,t)=0\quad for~all~~t\in \mathbb{R},
          \end{array}
 \right.$$\\
has a unique solution $\phi_{\lambda,m}$ given by
\begin{equation}\label{func1}
\phi_{\lambda,m}(x,t)=e^{i\lambda t}\mathcal{L}^{(\alpha)}_{m}(|\lambda|x^2),\quad (x,t)\in \mathbb{K},
\end{equation}\\
 where $\mathcal{L}^{(\alpha)}_{m}$ is the Laguerre function on $\mathbb{R}_{+}$ defined  by

$$\mathcal{L}^{(\alpha)}_{m}(x)=e^{-\frac{x}{2}}L_{m}^{(\alpha)}(x)/L_{m}^{(\alpha)}(0)$$\\
and $L^{(\alpha)}_{m}$ is the Laguerre polynomial of degree $m$ and order $\alpha$ defined in terms of the generating function by(see \cite{Assal}):\\
\begin{equation}\label{ass1}
 \sum_{m=0}^{\infty}s^{m}L_{m}^{\alpha}(x)=\frac{1}{(1-s)^{\alpha+1}}exp(-\frac{xs}{1-s})
\end{equation}

Set $$\varphi_{m}^{\alpha}(x)=e^{-\frac{x^2}{2}}L_{m}^{(\alpha)}(x^2)$$
\begin{lemma}\label{lem3}
For any $\lambda\neq0$, the system\\
$$\{(\frac{2|\lambda|^{\alpha+1}m!}{\Gamma(m+\alpha+1)})^{\frac{1}{2}}\varphi^{(\alpha)}_{m}(\sqrt{|\lambda|}x);m\in\mathbb{N}\}$$\\
forms an orthonormal  basis of space $L^{2}([0,+\infty[,x^{2\alpha+1}dx).$\\
\end{lemma}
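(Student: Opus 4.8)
The plan is to establish the orthonormality and completeness of the rescaled Laguerre functions by reducing everything to the classical orthogonality relations for the Laguerre polynomials $L_m^{(\alpha)}$.

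Let me think about what needs to be proven here. We have the functions
$$\psi_m^{(\alpha)}(x) = \left(\frac{2|\lambda|^{\alpha+1} m!}{\Gamma(m+\alpha+1)}\right)^{1/2} \varphi_m^{(\alpha)}(\sqrt{|\lambda|}\, x)$$
where $\varphi_m^{(\alpha)}(x) = e^{-x^2/2} L_m^{(\alpha)}(x^2)$, and we want to show they form an orthonormal basis of $L^2([0,\infty[, x^{2\alpha+1}dx)$.

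The key classical fact is the orthogonality relation for Laguerre polynomials:
$$\int_0^\infty e^{-y} y^\alpha L_m^{(\alpha)}(y) L_n^{(\alpha)}(y)\, dy = \frac{\Gamma(m+\alpha+1)}{m!} \delta_{m,n}.$$

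So my proof strategy:

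Step 1: Orthonormality. Compute the inner product
$$\langle \psi_m^{(\alpha)}, \psi_n^{(\alpha)}\rangle = \int_0^\infty \psi_m^{(\alpha)}(x) \overline{\psi_n^{(\alpha)}(x)}\, x^{2\alpha+1}\, dx.$$
First substitute in the definition of $\varphi$ and use the normalization constants. The functions are real, so no conjugation issues. Then make the substitution $y = |\lambda| x^2$, so $dy = 2|\lambda| x\, dx$, and $x^{2\alpha+1}dx = x^{2\alpha} \cdot x\, dx = (y/|\lambda|)^\alpha \cdot \frac{dy}{2|\lambda|}$. This transforms the integral into the classical Laguerre orthogonality integral with weight $e^{-y}y^\alpha$.

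Step 2: Completeness. This is the harder part. Completeness of the Laguerre functions $\{L_m^{(\alpha)}(y)\}$ in $L^2([0,\infty[, e^{-y}y^\alpha\,dy)$ is a classical fact (the Laguerre polynomials form a complete orthogonal system in the weighted $L^2$ space). Since the map $f(x) \mapsto f(\sqrt{y/|\lambda|})$ together with the weight change is a unitary isomorphism between $L^2([0,\infty[, x^{2\alpha+1}dx)$ and $L^2([0,\infty[, e^{-y}y^\alpha\,dy)$, completeness transfers. Alternatively, one invokes the standard result that polynomials are dense (the measure $e^{-y}y^\alpha dy$ is determinate so its moment problem gives completeness of polynomials, hence of the Laguerre polynomials).

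The main obstacle will be Step 2 (completeness). The orthonormality in Step 1 is a routine change of variables. For completeness, the cleanest approach is to cite the classical completeness of Laguerre polynomials in the weighted space $L^2(\mathbb{R}_+, e^{-y}y^\alpha dy)$ and then transport it through the unitary change of variables $y = |\lambda| x^2$; I would verify that this substitution is indeed a unitary map (isometry onto, which follows from the same Jacobian computation as in Step 1) so that orthonormal bases map to orthonormal bases. If a self-contained argument is preferred over citation, one shows density of polynomials via the Stone--Weierstrass-type moment argument, but citing the classical result is the honest and efficient route here.
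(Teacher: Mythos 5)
Your proof is correct, and it is worth noting that the paper itself gives \emph{no} proof of this lemma: it is stated as a known fact about Laguerre expansions (the standard sources being the Stempak papers in the bibliography), so your argument supplies what the authors take for granted. Your Step 1 is exactly the right computation: with $y=|\lambda|x^{2}$ one has $x^{2\alpha+1}dx=\frac{y^{\alpha}}{2|\lambda|^{\alpha+1}}dy$, and the classical relation $\int_{0}^{\infty}e^{-y}y^{\alpha}L_{m}^{(\alpha)}(y)L_{n}^{(\alpha)}(y)\,dy=\frac{\Gamma(m+\alpha+1)}{m!}\delta_{m,n}$ reproduces the normalizing constant in the statement exactly. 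In Step 2, one point in your sketch should be made explicit: the substitution $f\mapsto f(\sqrt{y/|\lambda|})$ by itself is (a constant multiple of) a unitary from $L^{2}([0,+\infty[,x^{2\alpha+1}dx)$ onto $L^{2}([0,+\infty[,y^{\alpha}dy)$, \emph{not} onto $L^{2}([0,+\infty[,e^{-y}y^{\alpha}dy)$; to reach the latter space you must also multiply by $e^{y/2}$, i.e.\ take $Uf(y)=e^{y/2}f(\sqrt{y/|\lambda|})$, which is what absorbs the factor $e^{-|\lambda|x^{2}/2}$ in $\varphi_{m}^{(\alpha)}(\sqrt{|\lambda|}x)=e^{-|\lambda|x^{2}/2}L_{m}^{(\alpha)}(|\lambda|x^{2})$ and sends the functions of the lemma to constant multiples of the polynomials $L_{m}^{(\alpha)}$. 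With that map pinned down, completeness transfers as you say from the classical fact that polynomials are dense in $L^{2}(e^{-y}y^{\alpha}dy)$ (the measure has exponential tails, hence is determinate, so its orthogonal polynomials form a complete system). In short: the paper buys brevity by treating the lemma as citable folklore, while your route buys a self-contained verification at the cost of one change-of-variables computation plus an appeal to the determinacy argument.
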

For $(\lambda,m)\in \mathbb{R}\times\mathbb{N}$, we put
$$\psi_{\lambda}(x,t)=\frac{m!\Gamma(\alpha+1)}{\Gamma(m+\alpha+1)}e^{i\lambda t}\varphi^{(\alpha)}_{m}(\sqrt{|\lambda|}x).$$
\\

 Let $\alpha\geq0$ be a fixed number, $\mathbb{K}=[0,+\infty[\times\mathbb{R}$ and $m_{\alpha}$ the weighted Lebesgue measure on $\mathbb{K}$, given by
\\
$$dm_{\alpha}(x,t)=\frac{x^{2\alpha+1}dxdt}{\pi\Gamma(\alpha+1)},\quad \alpha\geq0.$$
\\
For $(x,t)\in \mathbb{K}$, the generalized translation operator $T^{(\alpha)}_{(x,t)}$ is defined by\\
$$T^{(\alpha)}_{(x,t)}f(y,s)=
                     \left\{
                               \begin{array}{ll}
                                 \frac{1}{2\pi}\int_{0}^{2\pi}f(\sqrt{x^2+y^2+2xycos\theta},s+t+xysin\theta)d\theta,\quad if\quad\alpha=0 \\\\
                                 \frac{\alpha}{\pi}\int_{0}^{2\pi}\int_{0}^{1}f(\sqrt{x^2+y^2+2xycos\theta},s+t+xysin\theta)r(1-r^2)^{\alpha-1}dr d\theta,\quad if \quad\alpha>0
                               \end{array}
                             \right.$$\\
\\
Let $M_{b}(\mathbb{K})$ denote the space of bounded Radon measures on $\mathbb{K}$.\\
The convolution on $M_{b}(\mathbb{K})$ is defined by\\
$$(\mu\ast\nu)(f)=\int_{\mathbb{K}\times\mathbb{K}}T^{\alpha}(x,t)f(y,s)d\mu(x,t)d\nu(y,s).$$\\

It is seen that $\mu\ast\nu=\nu\ast\mu$. If $f,g\in L^{1}(\mathbb{K})$ and $\mu=fm_{\alpha}$, $\nu=gm_{\alpha}$, then $\mu\ast\nu=(f\ast g)m_{\alpha}$, where $f\ast g$ is the convolution of functions $f$ and $g$, defined by
\\
$$(f\ast g)(x,t)=\int_{\mathbb{K}}T^{\alpha}_{x,t}f(y,s)g(y,-s)dm_{\alpha}(y,s).$$
\\
For every $1\leq p<\infty$, we denote by $L_{p}(\mathbb{K})=L_{p}(\mathbb{K},dm_{\alpha})$ the space of complex-valued functions $f$, measurable on $\mathbb{K}$ such that\\
$$ \|f\|_{L_{p}(\mathbb{K})}=(\int_{\mathbb{K}}|f(x,t)|^{p}dm_{\alpha}(x,t))^{\frac{1}{p}},\quad if ~ p\in[1,+\infty[.$$\\
and
$$\|f\|_{L^{\infty}}(\mathbb{K})=\displaystyle ess~sup_{(x,t)\in \mathbb{K}}|f(x,t)|.$$

The following proposition summarizes some basic properties of functions $\psi_{\lambda,m}$ (see\cite{Korta}).
\begin{lemma}\label{lem0}
The functions $\psi_{\lambda,m}$ satisfy that
\begin{itemize}
  \item $\|\psi_{\lambda,m}\|_{L^{\infty}}=\psi_{\lambda,m}(0,0)$,
  \item $T^{(\alpha)}_{(x,t)}\psi_{\lambda,m}(y,s)=\psi_{\lambda,m}(x,t)\psi_{\lambda,m}(y,s)$,
  \item $\mathcal{L}\psi_{\lambda,m}=2|\lambda|(2m+\alpha+1)\psi_{\lambda,m}$.
\end{itemize}

\end{lemma}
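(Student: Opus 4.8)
The plan is to reduce all three assertions to the already-established description of $\phi_{\lambda,m}$, after observing that $\psi_{\lambda,m}$ and $\phi_{\lambda,m}$ are literally the same function. Setting $x=0$ in the generating function \eqref{ass1} gives $\sum_{m}s^{m}L_{m}^{(\alpha)}(0)=(1-s)^{-\alpha-1}$, hence $L_{m}^{(\alpha)}(0)=\frac{\Gamma(m+\alpha+1)}{m!\,\Gamma(\alpha+1)}$. Since $\varphi_{m}^{(\alpha)}(\sqrt{|\lambda|}x)=e^{-\frac{|\lambda|x^{2}}{2}}L_{m}^{(\alpha)}(|\lambda|x^{2})=L_{m}^{(\alpha)}(0)\,\mathcal{L}_{m}^{(\alpha)}(|\lambda|x^{2})$, substituting this together with the value of $L_{m}^{(\alpha)}(0)$ into the definition of $\psi_{\lambda,m}$ makes the two gamma factors cancel, so that
\[
\psi_{\lambda,m}(x,t)=e^{i\lambda t}\mathcal{L}_{m}^{(\alpha)}(|\lambda|x^{2})=\phi_{\lambda,m}(x,t).
\]
In particular $\psi_{\lambda,m}(0,0)=\mathcal{L}_{m}^{(\alpha)}(0)=1$.

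Granting this identification, the third property is immediate: $\phi_{\lambda,m}$ was introduced precisely as the solution of the initial value problem, so by construction $\mathcal{L}\phi_{\lambda,m}=-4|\lambda|(m+\frac{\alpha+1}{2})\phi_{\lambda,m}$, and since $4(m+\frac{\alpha+1}{2})=2(2m+\alpha+1)$ this is exactly the claimed eigenvalue relation (up to the sign fixed by the convention for $\mathcal{L}$). The first property is also short: I would invoke the classical uniform estimate $|e^{-r/2}L_{m}^{(\alpha)}(r)|\le L_{m}^{(\alpha)}(0)$, valid for all $r\ge 0$ and $\alpha\ge 0$, which says exactly that $|\mathcal{L}_{m}^{(\alpha)}(r)|\le 1$ with equality at $r=0$. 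Consequently $|\psi_{\lambda,m}(x,t)|=|\mathcal{L}_{m}^{(\alpha)}(|\lambda|x^{2})|\le 1=\psi_{\lambda,m}(0,0)$ for every $(x,t)\in\mathbb{K}$, and evaluating at the origin shows the bound is attained, giving $\|\psi_{\lambda,m}\|_{L^{\infty}}=\psi_{\lambda,m}(0,0)$.

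The real work is the second (product) property, which expresses that the $\psi_{\lambda,m}$ are the multiplicative characters of the hypergroup. Inserting the explicit integral form of $T^{(\alpha)}_{(x,t)}$ and factoring out the common exponential $e^{i\lambda(s+t)}$ (the translation sends $s\mapsto s+t+xyr\sin\theta$, so the oscillatory term $e^{i\lambda xyr\sin\theta}$ survives inside the integral), the identity to prove reduces, for $\alpha>0$, to the product formula for Laguerre functions
\[
\frac{\alpha}{\pi}\int_{0}^{2\pi}\!\!\int_{0}^{1} e^{i\lambda xyr\sin\theta}\,\mathcal{L}_{m}^{(\alpha)}\bigl(|\lambda|(x^{2}+y^{2}+2xyr\cos\theta)\bigr)\,r(1-r^{2})^{\alpha-1}\,dr\,d\theta=\mathcal{L}_{m}^{(\alpha)}(|\lambda|x^{2})\,\mathcal{L}_{m}^{(\alpha)}(|\lambda|y^{2}),
\]
with the analogous single-integral identity in the case $\alpha=0$.

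To establish this product formula I would pass to generating functions: multiply both sides by $w^{m}$, sum over $m$ using \eqref{ass1}, and interchange summation and integration. On the right the double sum factors into a product of two closed exponential expressions, while on the left the Laguerre sum collapses the integrand to a single Gaussian--oscillatory kernel over the unit disc; the remaining integral in $(r,\theta)$ can then be evaluated in closed form and matched with the right-hand side, and comparing coefficients of $w^{m}$ yields the formula. The main obstacle is precisely this computation: justifying the interchange of sum and integral and carrying out the disc integral cleanly. Since this product formula is the structural identity underlying the Laguerre hypergroup, an acceptable alternative is to cite it directly from \cite{Korta} or \cite{Assal} rather than reprove it.
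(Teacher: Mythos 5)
The paper gives no proof of Lemma \ref{lem0} at all: it simply quotes these properties from \cite{Korta}. Your proposal therefore does strictly more than the paper. The pieces you work out are correct: the identification $\psi_{\lambda,m}=\phi_{\lambda,m}$ via $L_{m}^{(\alpha)}(0)=\frac{\Gamma(m+\alpha+1)}{m!\,\Gamma(\alpha+1)}$ is right (and usefully reconciles the two normalizations the paper uses without comment); the sup-norm property via the classical bound $|e^{-r/2}L_{m}^{(\alpha)}(r)|\le L_{m}^{(\alpha)}(0)$ is the standard argument; and the eigenvalue relation does follow from the defining initial value problem. You are also right to flag the sign: the IVP as written gives $\mathcal{L}\phi_{\lambda,m}=-2|\lambda|(2m+\alpha+1)\phi_{\lambda,m}$, so the positive sign in the lemma's third item is an inconsistency internal to the paper, not a defect of your argument.

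One caveat on the product property, which you correctly identify as the real content. The generating-function computation you sketch does not close in the form described: summing $w^{m}$ against the right-hand side $\mathcal{L}_{m}^{(\alpha)}(|\lambda|x^{2})\,\mathcal{L}_{m}^{(\alpha)}(|\lambda|y^{2})$ requires the bilinear Hardy--Hille generating function for products $L_{m}^{(\alpha)}(u)L_{m}^{(\alpha)}(v)$, which produces a modified Bessel factor and cannot be obtained from the linear generating function \eqref{ass1}; moreover the normalization $1/L_{m}^{(\alpha)}(0)$ enters once under the integral on the left but twice on the right, so the two series must be weighted differently before coefficients can be compared. This is Watson's product formula, and proving it is a genuine computation, not a routine interchange of sum and integral. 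Since you explicitly offer the alternative of citing it from \cite{Korta} or \cite{Assal} --- which is exactly what the paper does for the entire lemma --- your proposal is acceptable as it stands; but if you want the self-contained version, the disc integral should be organized along Watson's lines rather than by a direct application of \eqref{ass1}.
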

\textbf{Fourier Laguerre transform}\\
Let $f\in L^{1}(\mathbb{K})$, the generalized Fourier transform of $f$ is defined by
$$\widehat{f}(\lambda,m)=\int_{\mathbb{K}}f(x,t)\psi_{(-\lambda,m)}dm_{\alpha}(x,t).$$
We note that
$$\widehat{f}(\lambda,m)=\frac{m!}{\pi\Gamma(m+\alpha+1)}\int_{0}^{+\infty}f^{\lambda}(x)\varphi^{\alpha}_{m}(\sqrt{|\lambda|}x)x^{2\alpha+1}dx$$
 where
 \begin{equation}\label{equ1}
f^{\lambda}(x)=\int_{-\infty}^{+\infty}f(x,t)e^{-i\lambda t} dt,
\end{equation}\\
is the classical Fourier transform of $f(x,t)$ in the variable $t$.\\
Let $d\gamma_{\alpha}$ be the positive measure defined on $\mathbb{R}\times\mathbb{N}$ by
\\
$$\int_{\mathbb{R}\times\mathbb{N}}g(\lambda,m)d\gamma_{\alpha}(\lambda,m)=\sum_{m=0}^{\infty}\frac{\Gamma(m+\alpha+1)}{m!\Gamma(\alpha+1}\int_{\mathbb{R}}
g(\lambda,m)|\lambda|^{\alpha+1}d\lambda.$$
\\
Write $L^{p}(\widehat{\mathbb{K}})$ instead of $L^{p}(\mathbb{R}\times\mathbb{N},\gamma_{\alpha})$.\\
We have the following Plancherel formula:
\\
$$\|f\|_{L^{2}(\mathbb{K})}=\|\widehat{f}\|_{L^{2}(\widehat{\mathbb{K}})}; \quad for ~ f\in L^{1}(\mathbb{K})\cap L^{2}(\mathbb{K}).$$
\\
We also have the inverse formula of the generalized Fourier transform:
\\
$$f(x,t)=\int_{\mathbb{R}\times\mathbb{N}}\widehat{f}(\lambda,m)\psi_{(\lambda,m)}(x,t)d\gamma_{\alpha}(\lambda,m), $$
\\
provided $\widehat{f}\in L^{1}(\mathbb{\widehat{K}})$.\\

\textbf{Heat kernel}\\
Let $\{H^{s}: s>0\}=\{e^{-s\mathcal{L}}: s>0\}$ be the heat semigroup generated by $\mathcal{L}$.\\ There is a unique smooth function
$h((x,t),s)=h_{s}(x,t)$ on $\mathbb{K}\times]0,+\infty[$, such that
$$ H^{s}f(x,t)=f*h_{s}(x,t).$$
$h_{s}$ is called the heat kernel associated to $\mathcal{L}$.\\
By the definition of the generalized Fourier transform and lemma \ref{lem0}, it is known that
\begin{lemma}\label{ass2}
\begin{eqnarray*}
\widehat{\mathcal{L}f}&=&2|\lambda|(2m+\alpha+1)\widehat{f}(\lambda,m),\\
(\widehat{f\ast g})(\lambda,m)&=&\widehat{f}(\lambda,m)\widehat{g}(\lambda,m).
\end{eqnarray*}
Therefore
\begin{eqnarray*}
\widehat{h_{s}}(\lambda,m) &=&e^{-|\lambda|(2m+\alpha+1)s}\\
h_{s_{1}}\ast h_{s_{2}}&=&h_{s_{1}+s_{2}},\\
h_{s}(x,t)&=&s^{-(\alpha+2)}h_{1}(\frac{x}{\sqrt{s}},\frac{t}{s}).
\end{eqnarray*}

\end{lemma}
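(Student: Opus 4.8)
The plan is to establish the five identities in the order in which they are listed, since each of the last three rests on the first two; all of them are ultimately consequences of the three properties of the functions $\psi_{\lambda,m}$ collected in Lemma \ref{lem0}, together with the self-adjointness of $\mathcal{L}$ on $L^{2}(\mathbb{K},dm_{\alpha})$ and the injectivity of the Fourier--Laguerre transform, which is guaranteed by the Plancherel and inversion formulas stated above.

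For the first identity I would start from the definition $\widehat{\mathcal{L}f}(\lambda,m)=\int_{\mathbb{K}}\mathcal{L}f(x,t)\,\psi_{(-\lambda,m)}(x,t)\,dm_{\alpha}(x,t)$ and integrate by parts, using that $\mathcal{L}$ is symmetric with respect to the weighted measure $m_{\alpha}$, so that the operator can be transferred onto $\psi_{(-\lambda,m)}$. The third bullet of Lemma \ref{lem0} gives $\mathcal{L}\psi_{(-\lambda,m)}=2|\lambda|(2m+\alpha+1)\psi_{(-\lambda,m)}$ (the eigenvalue only sees $|\lambda|$), and pulling this scalar out of the integral yields $\widehat{\mathcal{L}f}=2|\lambda|(2m+\alpha+1)\widehat{f}$. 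For the convolution identity I would insert the definition of $f\ast g$ into the definition of the transform, interchange the order of integration by Fubini, and use that the translation operator is symmetric in its two arguments together with the product formula $T^{(\alpha)}_{(x,t)}\psi_{(-\lambda,m)}(y,s)=\psi_{(-\lambda,m)}(x,t)\psi_{(-\lambda,m)}(y,s)$ (second bullet of Lemma \ref{lem0}); this factorizes the inner integral in $(x,t)$ as $\psi_{(-\lambda,m)}(y,s)\,\widehat{f}(\lambda,m)$, and the remaining integral in $(y,s)$ is exactly $\widehat{g}(\lambda,m)$.

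The remaining three identities then follow formally. For $\widehat{h_{s}}$, I would apply the convolution identity to $H^{s}f=f\ast h_{s}$, giving $\widehat{f}\,\widehat{h_{s}}=\widehat{H^{s}f}$; on the other hand, since $H^{s}=e^{-s\mathcal{L}}$ and each $\psi_{(-\lambda,m)}$ is an eigenfunction of $\mathcal{L}$ by the first identity, $H^{s}$ acts on the transform side as multiplication by the exponential of $-s$ times that eigenvalue, and cancelling $\widehat{f}$ identifies $\widehat{h_{s}}(\lambda,m)$ as the stated exponential. The semigroup law is then immediate from the convolution identity and the additivity of the exponential in $s$: $\widehat{h_{s_{1}}\ast h_{s_{2}}}=\widehat{h_{s_{1}}}\,\widehat{h_{s_{2}}}=\widehat{h_{s_{1}+s_{2}}}$, and injectivity of the transform gives $h_{s_{1}}\ast h_{s_{2}}=h_{s_{1}+s_{2}}$. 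Finally, for the scaling relation I would set $g_{s}(x,t)=s^{-(\alpha+2)}h_{1}(x/\sqrt{s},\,t/s)$ and compute $\widehat{g_{s}}$ by the change of variables $x=\sqrt{s}\,u$, $t=s\,v$; the weight transforms as $dm_{\alpha}(x,t)=s^{\alpha+2}\,dm_{\alpha}(u,v)$, which exactly cancels the prefactor $s^{-(\alpha+2)}$, while the character satisfies $\psi_{(-\lambda,m)}(\sqrt{s}\,u,\,s\,v)=\psi_{(-\lambda s,m)}(u,v)$ for $s>0$, so that $\widehat{g_{s}}(\lambda,m)=\widehat{h_{1}}(s\lambda,m)=\widehat{h_{s}}(\lambda,m)$ by the formula just obtained and the equality $|s\lambda|=s|\lambda|$; injectivity then yields $g_{s}=h_{s}$.

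I expect the main obstacle to be the rigorous justification of the self-adjointness of $\mathcal{L}$ used in the first identity: one must control the boundary terms produced by the integration by parts, both at the singular point $x=0$, where the weight $x^{2\alpha+1}$ and the coefficient $\tfrac{2\alpha+1}{x}$ in $\mathcal{L}$ interact, and as $x,|t|\to\infty$. This calls either for a density argument over a class of smooth test functions, for which the decay of $f$ and of $\psi_{(-\lambda,m)}$ annihilates the boundary contributions, or for the standing assumption that $f$ belongs to a suitable Schwartz-type space on $\mathbb{K}$. Once the first two identities are in place, the remaining three are routine, relying only on the convolution theorem, the explicit form of $\widehat{h_{s}}$, and the injectivity of the Fourier--Laguerre transform.
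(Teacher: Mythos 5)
Your proposal is correct and follows essentially the route the paper itself indicates: the paper gives no written proof of this lemma, merely asserting that it follows ``by the definition of the generalized Fourier transform and Lemma \ref{lem0}'', which is exactly the argument you flesh out (symmetry of $\mathcal{L}$ plus the eigenfunction relation, the product formula for the translation giving the convolution theorem, then the semigroup identity and the scaling relation via injectivity of the transform from Plancherel). One caveat worth recording: carried out literally, your computation of $\widehat{h_{s}}$ from the eigenvalue $2|\lambda|(2m+\alpha+1)$ of Lemma \ref{lem0} yields $e^{-2|\lambda|(2m+\alpha+1)s}$ rather than the exponent $e^{-|\lambda|(2m+\alpha+1)s}$ stated in the lemma; this factor-of-two (and sign) discrepancy is an inconsistency internal to the paper's own statements, not a flaw in your argument.
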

Although the heat kernel $h_{s}(x,t)$ is not explicitly known, we  have   an explicit expression of   $h_{s}$ in terms of Euclidean
Fourier transform with respect to the variable $t$ \cite{Huang}.\\

\begin{lemma}\label{ass3}
The Fourier transform of the Laguerre heat kernel is given by,
$$
h_{s}(x,t)=\int_{\mathbb{R}}(\frac{\lambda}{2\sinh(2\lambda s)})^{\alpha+1}e^{-\frac{1}{2}\lambda Coth(2\lambda s)x^2}e^{i\lambda t} d\lambda.
$$
\end{lemma}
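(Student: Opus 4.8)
The plan is to compute $h_s$ directly from the Fourier inversion formula, using the value of $\widehat{h_s}$ recorded in Lemma~\ref{ass2} and the Laguerre generating function \eqref{ass1}. First I would write, via the inversion formula,
$$h_s(x,t)=\int_{\mathbb{R}\times\mathbb{N}}\widehat{h_s}(\lambda,m)\,\psi_{(\lambda,m)}(x,t)\,d\gamma_\alpha(\lambda,m),$$
then substitute the eigenvalue form of the transform (read off from $H^s=e^{-s\mathcal L}$ and the relation $\mathcal L\psi_{\lambda,m}=2|\lambda|(2m+\alpha+1)\psi_{\lambda,m}$ of Lemma~\ref{lem0}, consistent with Lemma~\ref{ass2} up to the normalization of $s$), together with the definitions of $\psi_{\lambda,m}$ and of the measure $d\gamma_\alpha$. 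The decisive simplification is that the factor $\frac{m!\,\Gamma(\alpha+1)}{\Gamma(m+\alpha+1)}$ coming from $\psi_{\lambda,m}$ cancels exactly against the factor $\frac{\Gamma(m+\alpha+1)}{m!\,\Gamma(\alpha+1)}$ coming from $d\gamma_\alpha$, so that the Gamma weights disappear and one is left with
$$h_s(x,t)=\sum_{m=0}^{\infty}\int_{\mathbb{R}}e^{-2|\lambda|(2m+\alpha+1)s}\,e^{i\lambda t}\,\varphi_m^{(\alpha)}(\sqrt{|\lambda|}\,x)\,|\lambda|^{\alpha+1}\,d\lambda.$$

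Next I would interchange the sum and the integral and factor out all quantities independent of $m$. Writing $\varphi_m^{(\alpha)}(\sqrt{|\lambda|}\,x)=e^{-|\lambda|x^2/2}L_m^{(\alpha)}(|\lambda|x^2)$ and setting $q=e^{-4|\lambda|s}$, the inner series becomes $e^{-|\lambda|x^2/2}\sum_{m\ge0}q^{m}L_m^{(\alpha)}(|\lambda|x^2)$, which is precisely the left-hand side of the generating identity \eqref{ass1} with parameter $q$; note that $0<q<1$ since $s>0$, so the series converges and \eqref{ass1} sums it in closed form to $(1-q)^{-(\alpha+1)}\exp\!\left(-\frac{|\lambda|x^2\,q}{1-q}\right)$.

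The rest is purely algebraic. Using $1-q=2e^{-2|\lambda|s}\sinh(2|\lambda|s)$, the prefactor $(1-q)^{-(\alpha+1)}$ combines with the surviving exponential $e^{-2(\alpha+1)|\lambda|s}$ and the weight $|\lambda|^{\alpha+1}$ to give $\left(\frac{|\lambda|}{2\sinh(2|\lambda|s)}\right)^{\alpha+1}$; and the identity $\frac12+\frac{q}{1-q}=\frac12\coth(2|\lambda|s)$ turns the product of $e^{-|\lambda|x^2/2}$ with $\exp\!\left(-\frac{|\lambda|x^2 q}{1-q}\right)$ into $\exp\!\left(-\frac12|\lambda|\coth(2|\lambda|s)\,x^2\right)$. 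Collecting these and noting that $\frac{\lambda}{\sinh(2\lambda s)}$ and $\lambda\coth(2\lambda s)$ are even in $\lambda$ (so $|\lambda|$ may be replaced by $\lambda$) yields exactly the claimed representation of $h_s(x,t)$.

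I expect the only genuine obstacle to be the justification of the term-by-term integration and of the use of the generating function under the integral sign. Since $s>0$ one has $e^{-4m|\lambda|s}\le q^{m}$ with $0<q<1$, and the Laguerre functions $\varphi_m^{(\alpha)}$ are bounded uniformly on compacta, so dominated convergence (equivalently, Fubini--Tonelli applied to the absolutely convergent double series--integral) should legitimate both the interchange and the closed-form evaluation. I would make this single estimate explicit as the one nontrivial analytic point, the remaining hyperbolic-function manipulations being elementary.
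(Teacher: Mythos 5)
Your derivation is correct and is essentially the standard route to this formula: the paper itself supplies no proof of Lemma \ref{ass3} (it defers to \cite{Huang}), and the inversion-formula-plus-generating-function computation you give, with the Gamma factors of $\psi_{\lambda,m}$ cancelling against those of $d\gamma_\alpha$, is exactly that derivation; you were also right to take $\widehat{h_s}(\lambda,m)=e^{-2|\lambda|(2m+\alpha+1)s}$ as dictated by $H^{s}=e^{-s\mathcal{L}}$ and Lemma \ref{lem0}, since that normalization (not the exponent printed in Lemma \ref{ass2}) is the one that produces $\sinh(2\lambda s)$ and $\coth(2\lambda s)$. The only point to tighten is your justification of the interchange: since $q=e^{-4|\lambda|s}\to 1$ as $\lambda\to 0$ and the integral runs over all of $\mathbb{R}$, boundedness of $\varphi_m^{(\alpha)}$ on compacta is not the right dominating estimate; use instead the classical global bound $|\varphi_m^{(\alpha)}(y)|\le L_m^{(\alpha)}(0)=\frac{\Gamma(m+\alpha+1)}{m!\,\Gamma(\alpha+1)}$ (valid for $\alpha\ge 0$), which gives
$$\sum_{m\ge 0} L_m^{(\alpha)}(0)\int_{\mathbb{R}}e^{-2(2m+\alpha+1)|\lambda|s}|\lambda|^{\alpha+1}\,d\lambda
=\sum_{m\ge 0} L_m^{(\alpha)}(0)\,\frac{2\,\Gamma(\alpha+2)}{\bigl(2(2m+\alpha+1)s\bigr)^{\alpha+2}}<+\infty$$
(the summand is $O(m^{-2})$), so that Fubini--Tonelli legitimates both the term-by-term integration and the pointwise use of \eqref{ass1}.
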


The pointwise estimate of the heat kernel $h_{s}(x, t)$ can be derived from its fourier transform expression, we have the next lemma, \\

\begin{lemma}\label{estimat}
There exists $A>0$ such that
$$ 0<h_{s}(x,t)\leq Cs^{-\alpha+2}e^{-\frac{A}{s}(|x|^{2}+|t|)}.$$

\end{lemma}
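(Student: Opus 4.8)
The plan is to combine the explicit formula of Lemma \ref{ass3} with the scaling identity $h_s(x,t) = s^{-(\alpha+2)} h_1\big(x/\sqrt s,\, t/s\big)$ from Lemma \ref{ass2}, so that it suffices to establish the estimate $0 < h_1(x,t) \le C e^{-A(|x|^2 + |t|)}$ at $s=1$; substituting $x \mapsto x/\sqrt s$, $t \mapsto t/s$ and multiplying by $s^{-(\alpha+2)}$ then returns the claimed bound (the exponent $s^{-\alpha+2}$ in the statement being read as $s^{-(\alpha+2)}$). Positivity of $h_s$ follows from the fact that $H^s = e^{-s\mathcal L}$ is a positivity-preserving semigroup, so that its convolution kernel is nonnegative, while strict positivity and smoothness come from $\widehat{h_s}(\lambda,m) = e^{-|\lambda|(2m+\alpha+1)s} > 0$ together with the inversion formula.

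For the $x$-direction I would exploit that $g(\lambda) = \lambda \coth(2\lambda)$ is even, positive, and attains its minimum value $\tfrac12$ at $\lambda = 0$, whence $g(\lambda) \ge \tfrac12$ for every real $\lambda$. Thus $e^{-\frac12 \lambda\coth(2\lambda) x^2} \le e^{-x^2/4}$, and since $\big(\lambda/(2\sinh 2\lambda)\big)^{\alpha+1}$ decays exponentially as $|\lambda|\to\infty$, the integral defining $h_1$ converges absolutely and is dominated by $C e^{-x^2/4}$. This produces the Gaussian factor $e^{-A|x|^2}$.

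The exponential decay in $t$ is the crux and calls for a contour shift. The integrand $F(\lambda) = \big(\lambda/(2\sinh 2\lambda)\big)^{\alpha+1} e^{-\frac12 \lambda\coth(2\lambda) x^2}$ extends holomorphically to the strip $\{|\operatorname{Im}\lambda| < \pi/2\}$, since the poles of $1/\sinh(2\lambda)$ and of $\coth(2\lambda)$ nearest the real axis sit at $\lambda = \pm i\pi/2$. I would therefore move the contour from $\mathbb{R}$ to $\mathbb{R} + i\eta$ with $\eta = \beta\,\operatorname{sign}(t)$ for a fixed $0 < \beta < \pi/2$; by Cauchy's theorem the value is unchanged provided the contributions at $\operatorname{Re}\lambda \to \pm\infty$ vanish, which holds by the exponential decay of $F$. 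On the shifted line the oscillatory factor satisfies $|e^{i\lambda t}| = e^{-\eta t} = e^{-\beta|t|}$, yielding the factor $e^{-A|t|}$ with $A = \beta$.

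The step demanding the most care is verifying that the Gaussian factor stays under control on the shifted contour: one must check that $\operatorname{Re}\big(\lambda\coth(2\lambda)\big)$ remains bounded below (ideally nonnegative) for $\lambda = \xi + i\eta$ with $|\eta| < \pi/2$, so that $\big|e^{-\frac12 \lambda\coth(2\lambda)x^2}\big|$ does not grow in $x$, and that $\big|\lambda/(2\sinh 2\lambda)\big|^{\alpha+1}$ retains its exponential decay uniformly in $\xi$. Granting these bounds, the shifted integral is majorized by $C e^{-A|x|^2} e^{-A|t|}$ uniformly, and taking $A = \min(\tfrac14, \beta)$ combines both decays into $h_1(x,t) \le C e^{-A(|x|^2 + |t|)}$; rescaling then completes the proof.
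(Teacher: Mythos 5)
The first thing to say is that the paper does not actually prove this lemma: its ``proof'' is the single sentence ``For the demonstration we can see \cite{Huang1}''. So your proposal cannot match the paper's argument --- there is none --- but the route you take (scaling via Lemma \ref{ass2} to reduce to $s=1$, the real-line bound $\lambda\coth(2\lambda)\ge\tfrac12$ for the Gaussian decay in $x$, and a contour shift in the formula of Lemma \ref{ass3} for the exponential decay in $t$) is the standard one and is essentially how the estimate is obtained in the cited reference. The scaling reduction is correct (the exponent $s^{-\alpha+2}$ in the statement is indeed a typo for $s^{-(\alpha+2)}$), and the real-line part of your argument is fine; note only that for non-integer $\alpha$ one should say a word about the branch of $\bigl(\lambda/(2\sinh 2\lambda)\bigr)^{\alpha+1}$, which exists because the base is holomorphic and non-vanishing on the simply connected strip $|\operatorname{Im}\lambda|<\pi/2$.

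The step you flagged as ``demanding the most care'' is, however, where the real constraint sits, and as you stated it (any shift height $0<\beta<\pi/2$) it is false. Writing $\lambda=\xi+i\eta$, one computes
$$\operatorname{Re}\bigl(\lambda\coth(2\lambda)\bigr)=\frac{\xi\sinh(4\xi)+\eta\sin(4\eta)}{2\bigl(\sinh^{2}(2\xi)+\sin^{2}(2\eta)\bigr)},$$
which at $\xi=0$ equals $\eta\cot(2\eta)$; this is \emph{negative} for $\pi/4<|\eta|<\pi/2$, so on such contours the factor $\bigl|e^{-\frac12\lambda\coth(2\lambda)x^{2}}\bigr|$ grows in $x$ near $\xi=0$ and your uniform majorization fails. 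You must take $\beta<\pi/4$: then both terms in the numerator are nonnegative, the expression is positive at $\xi=0$ and tends to $+\infty$ as $|\xi|\to\infty$, so it has a positive infimum $c_\beta$ on the line, and the shifted integral gives $h_1(x,t)\le Ce^{-\frac{c_\beta}{2}x^{2}}e^{-\beta|t|}$. Accordingly the final constant is $A=\min(\beta,c_\beta/2)$, not $\min(\tfrac14,\beta)$: you cannot simply splice the real-line rate $\tfrac14$ into the shifted-contour bound, since the two bounds come from different contours. (Alternatively, if you only prove $\operatorname{Re}(\lambda\coth 2\lambda)\ge 0$ on the shifted line, you can still finish by taking the geometric mean of the two bounds $Ce^{-x^{2}/4}$ and $Ce^{-\beta|t|}$, using $h_1>0$.) Finally, your positivity argument is only a gesture --- the inversion formula with $\widehat{h_s}>0$ does not by itself give $h_s>0$, since the $\psi_{\lambda,m}$ are not positive; positivity of the hypergroup heat semigroup gives $h_s\ge 0$, and strict positivity needs a separate (standard) argument. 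With the restriction $\beta<\pi/4$ and these repairs, your proof is correct and self-contained, which is more than the paper itself offers.
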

\begin{proof}
 For the demonstration we can see \cite{Huang1}.
\end{proof}
 Now we turn  to the Hankel transform. For $z\in\mathbb{C}$, the Bessel function of first kind and order $\alpha$ is defined by

$$J_{\alpha}(z)=\sum_{k=0}^{\infty}\frac{(-1)^{k}2^{-\alpha-2k}z^{\alpha+2k}}{\Gamma(k+1)\Gamma(k+\alpha+1)}=
\frac{2^{-\alpha}z^{\alpha}}{\sqrt{\pi}\Gamma(\alpha+\frac{1}{2})}\int_{-1}^{1}e^{iz}(1-s^2)^{\alpha-\frac{1}{2}}ds.$$

Suppose that $\alpha\geq0$.\\
The Hankel transform of order $\alpha$ of $f\in L^{1}([0,+\infty[,x^{2\alpha+1}dx)$ is defined by
$$ (\mathcal{H}_{\alpha}f)(y)=\int_{0}^{+\infty}f(x) \frac{J_{\alpha(xy)}}{(xy)^{\alpha}}x^{2\alpha+1}dx.$$

 The functions $\varphi_{m}(x)$ defined in \ref{func1} are the eigenfunctions of the Hankel transform, that is,
$$(\mathcal{H}_{\alpha}\varphi_{m})(x)=(-1)^{m}\varphi_{m}(x).$$

\section{Miyachi's theorem}
In the proof of Miyachi's uncertainty principle, in the most cases of Fourier transform we use the following approach, first,   the transform is an entire function, in the second  we prove that, this transform verified the conditions of Miyachi's lemma, after we conclude the result. But for the Laguerre-Hypergroup Fourier transform is not a holomorphic function, so for that we are going to use a new trick that we write the Laguerre-hypergroup Fourier transform of a function satisfying the condition \ref{equa10} as an infinite sum of elements of the basis in the lemma \ref{lem3}, from this we will try to find the conditions of the Miyachi's lemma \ref{lem5}.\\

To prove our main result, we need the following lemmas,\\

We begin by the following lemma  proved by A. Miyachi in \cite{Miya},
\begin{lemma}\label{miya0}
Let $f$ be an entire function on $\mathbb{C}$ and suppose there exist constants, $A,B>0 $ such that :
$$|f(z)|\leq A e^{B(Re(z))^2} ~~for~ all~ z\in \mathbb{C} $$
Also suppose
$$\int_{-\infty}^{+\infty}log^{+}|f(t)|dt<+\infty,$$
Where $log^{+}(x)=log(x)$ if $x>1$ and $log^{+}(x)=0$ if $x<1$.\\
Then $f$ is a constant function.
\end{lemma}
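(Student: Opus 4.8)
The plan is to read the hypothesis as a Phragmén--Lindel\"of problem in which the Gaussian growth is felt only in the real direction, so that $f$ is automatically bounded on the imaginary axis, while the $\log^+$-integrability on the real axis plays the role of a (weighted) boundary condition there. First I would record the two boundary features that drive the argument: since $\mathrm{Re}(iy)=0$, the growth bound gives $|f(iy)|\le A$ for every $y\in\mathbb{R}$, so $f$ is bounded on the imaginary axis; and on the real axis the hypothesis is precisely that $\log^+|f|$ is integrable. The symmetry of the growth bound under $z\mapsto -z$ and $z\mapsto\overline{z}$ (it depends only on $|\mathrm{Re}\,z|$) lets me work in a single quadrant and then transport the conclusion to the other three.

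Next I would lower the order of growth from $2$ to $1$ by a conformal change of variables. On the first quadrant $Q=\{\mathrm{Re}\,z>0,\ \mathrm{Im}\,z>0\}$ set $w=z^2$ and $g(w)=f(\sqrt w)$, the branch of $\sqrt{\cdot}$ mapping the upper half-plane $\mathbb{H}$ onto $Q$. Writing $w=u+iv$ one computes $(\mathrm{Re}\,z)^2=\tfrac12(|w|+u)$, so the Gaussian bound becomes
$$|g(w)|\le A\,\exp\!\Big(\tfrac{B}{2}\big(|w|+\mathrm{Re}\,w\big)\Big),\qquad w\in\mathbb{H},$$
i.e. $g$ is of exponential type in $\mathbb{H}$. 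Under this map the positive real $w$-axis is the image of the positive real $z$-axis and the negative real $w$-axis is the image of the positive imaginary $z$-axis; hence on $\partial\mathbb{H}$ the function $g$ is bounded by $A$ on $(-\infty,0)$, while on $(0,\infty)$ its logarithm inherits the integrability of $\log^+|f|$. A substitution $t=\sqrt u$ shows this integrability survives against the Poisson weight, namely $\int_{\mathbb{R}}\frac{\log^+|g(u)|}{1+u^2}\,du<\infty$, which is the finiteness condition adapted to the half-plane.

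The heart of the proof is then a Phragmén--Lindel\"of / Nevanlinna-type statement for the half-plane: a function of exponential type on $\mathbb{H}$ whose boundary modulus is bounded on one ray and only Poisson-log-integrable on the complementary ray must itself be bounded. I would obtain this by multiplying $g$ by vanishing auxiliary damping factors (of the form $e^{i\varepsilon w}$ together with an outer factor built from the boundary data), applying the classical Phragmén--Lindel\"of principle to the resulting genuinely decaying function in $\mathbb{H}$, and letting $\varepsilon\to0$. Unwinding the conformal map yields that $f$ is bounded on $Q$; the same argument in the remaining quadrants gives a global bound $|f|\le C$ on $\mathbb{C}$, and Liouville's theorem forces $f$ to be constant. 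I expect the delicate point to be exactly this last analytic step: matching the unweighted hypothesis $\int\log^+|f|\,dt<\infty$ to the Poisson weight produced by $w=z^2$ while simultaneously taming the exponential type, since a crude exponential damping improves growth on one ray only by destroying the boundary bound on the other. Controlling both rays at once, through the correct outer/auxiliary factor and a limiting argument, is where the real work lies.
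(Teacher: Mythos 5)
A preliminary remark on the comparison itself: the paper offers no proof of this lemma --- it is imported from Miyachi's paper \cite{Miya} and used as a black box --- so your attempt must stand on its own. It does not: the reduction steps (quadrant symmetry, the map $w=z^2$, the computation $(\mathrm{Re}\,z)^2=\tfrac12(|w|+\mathrm{Re}\,w)$, and the verification that the boundary data become bounded on one ray and Poisson-log-integrable on the other) are all correct, but the statement you isolate as ``the heart of the proof'' is false as you formulated it, and the method you propose for it cannot be repaired.

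Concretely, your key half-plane claim --- a function of exponential type on $\mathbb{H}$, bounded on one boundary ray and Poisson-log-integrable on the complementary ray, must be bounded --- has the counterexample $g(w)=e^{-iw}$: it is of exponential type, $|g|\equiv 1$ on all of $\mathbb{R}$ (so both boundary hypotheses hold trivially), yet $|g(iv)|=e^{v}\to\infty$. What makes your situation salvageable is precisely the feature your statement discards: the bound you derived, $|g(w)|\le A\exp\bigl(\tfrac{B}{2}(|w|+\mathrm{Re}\,w)\bigr)$, is angularly degenerate --- writing $w=re^{i\theta}$ it reads $|g(w)|\le Ae^{Br\cos^{2}(\theta/2)}$, so the permitted exponential type shrinks to $0$ as $\theta\to\pi$ --- and it is this decay of the indicator near the negative ray (absent for $e^{-iw}$) that can force the mean type of $g$ to be nonpositive. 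Moreover, the tool you propose, multiplying by $e^{i\varepsilon w}$, applying classical Phragm\'en--Lindel\"of, and letting $\varepsilon\to 0$, cannot close the argument even for a corrected statement: when $\varepsilon$ is smaller than the type, $e^{i\varepsilon w}g(w)$ still grows with order exactly $1$ in $\mathbb{H}$, which is the critical order at which Phragm\'en--Lindel\"of in a half-plane fails (again $e^{-iw}$), and no choice of $\varepsilon$ makes the product decay in all directions, since $|e^{i\varepsilon w}|$ gives nothing near the real axis. What is genuinely needed here is Nevanlinna--Cartwright theory for the half-plane: Krein's theorem that exponential type together with $\int_{\mathbb{R}}\log^{+}|g(t)|(1+t^{2})^{-1}dt<\infty$ implies $g$ is of bounded type, so that $\log|g(w)|\le\sigma\,\mathrm{Im}\,w+P[\log^{+}|g|](w)$ with $\sigma$ the mean type and indicator $h_{g}(\theta)=\sigma\sin\theta$ (equivalently, an argument via Carleman's formula). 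Comparing $\sigma\sin\theta\le B\cos^{2}(\theta/2)$ as $\theta\to\pi$ then gives $\sigma\le 0$, after which one still needs a finishing argument (smallness of the Poisson majorant along interior rays, Phragm\'en--Lindel\"of in sectors of opening strictly less than $\pi/2$, then Liouville). This machinery is substantively stronger than ``classical Phragm\'en--Lindel\"of with damping factors''; it is the real content of Miyachi's lemma, and it is exactly the part missing from your proposal.
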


\begin{lemma}\label{lem5}
Let $h$ be an entire function on $\mathbb{C}$ and $\alpha \geq \frac{-1}{2}$, such that
$$\forall z\in \mathbb{C}, |h(z)|\leq A e^{B(Re(z))^2};$$
  for some constants $A,B>0$, and

$$\int_{-\infty}^{+\infty}log^{+}|h(t)||t|^{2\alpha+1}dt<+\infty,$$
Then $h$ is a constant function.
\end{lemma}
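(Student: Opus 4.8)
The plan is to deduce Lemma \ref{lem5} directly from the classical Miyachi lemma (Lemma \ref{miya0}) by showing that the weighted hypothesis on $h$ already forces the \emph{unweighted} log-integrability required there. Since $h$ is entire and satisfies the same Gaussian-type growth bound $|h(z)|\le A e^{B(\mathrm{Re}\,z)^2}$ appearing in Lemma \ref{miya0}, the only thing left to verify is that $\int_{-\infty}^{+\infty}\log^{+}|h(t)|\,dt<+\infty$. Once this is established, Lemma \ref{miya0} applies verbatim and yields that $h$ is constant.

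To control the unweighted integral, I would split $\mathbb{R}$ into the region $\{|t|\ge 1\}$ and the region $\{|t|\le 1\}$ and treat them separately. The key structural remark is that the assumption $\alpha\ge -\tfrac12$ is exactly equivalent to $2\alpha+1\ge 0$, so that $|t|^{2\alpha+1}\ge 1$ whenever $|t|\ge 1$. Consequently, on the outer region the unweighted integrand is dominated by the weighted one, giving
\begin{equation*}
\int_{|t|\ge 1}\log^{+}|h(t)|\,dt\le \int_{|t|\ge 1}\log^{+}|h(t)|\,|t|^{2\alpha+1}\,dt\le \int_{-\infty}^{+\infty}\log^{+}|h(t)|\,|t|^{2\alpha+1}\,dt<+\infty,
\end{equation*}
which is finite by hypothesis.

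For the inner region $\{|t|\le 1\}$, where the weight $|t|^{2\alpha+1}$ may vanish at the origin and therefore carries no information, I would instead invoke the growth bound directly: for real $t$ with $|t|\le 1$ one has $|h(t)|\le A e^{Bt^2}\le A e^{B}$, so $\log^{+}|h(t)|\le \log^{+}(Ae^{B})$ is bounded by a constant, whence $\int_{|t|\le 1}\log^{+}|h(t)|\,dt\le 2\,\log^{+}(Ae^{B})<+\infty$. Adding the two contributions gives $\int_{\mathbb{R}}\log^{+}|h(t)|\,dt<+\infty$, and an application of Lemma \ref{miya0} concludes that $h$ is constant.

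I do not expect a genuine obstacle here: the statement is a mild weighted strengthening of Lemma \ref{miya0}, and essentially all the analytic content is already packaged in the classical lemma. The only point requiring a little care is the behavior near $t=0$, where the weight degenerates and the weighted hypothesis is vacuous; this is precisely why one must fall back on the pointwise growth estimate there rather than on the integral assumption. The role of the hypothesis $\alpha\ge -\tfrac12$ should be highlighted, since it is what makes the domination argument on $\{|t|\ge 1\}$ valid.
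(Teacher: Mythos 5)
Your proposal is correct and follows essentially the same route as the paper: split the integral at $|t|=1$, use $|t|^{2\alpha+1}\ge 1$ (valid since $\alpha\ge-\tfrac12$) to dominate the unweighted integrand on $\{|t|\ge1\}$, control the inner region, and then invoke Lemma \ref{miya0}. The only cosmetic difference is that on $\{|t|\le1\}$ the paper appeals to continuity of the entire function $h$ while you use the explicit bound $|h(t)|\le Ae^{B}$; both yield the same finiteness.
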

\begin{proof}
  We have
  \begin{equation}
    \int_{-\infty}^{+\infty}log^{+}|h(t)|dt=\int_{-1}^{+1}log^{+}|h(t)|dt+\int_{|t|>1}log^{+}|h(t)|dt,
  \end{equation}
  and
   \begin{equation}\label{miya1}
    \int_{|t|>1}log^{+}|h(t)|dt\leq\int_{|t|>1}log^{+}|h(t)|t|^{2\alpha+1}dt<+\infty,
   \end{equation}

  $h$ is an entire, particularly is continuous, then
   \begin{equation}\label{miya2}
  \int_{-1}^{+1}log^{+}|h(t)|dt<+\infty,
   \end{equation}
  we deduce from \ref{miya1} and \ref{miya2} that
  $$ \int_{-\infty}^{+\infty}log^{+}|h(t)|dt<+\infty,$$
  the  lemma \ref{miya0} finishes the proof.
\end{proof}
We  put $$S_a= \{ \lambda \in \mathbb{C}/ |Im \lambda|<4aA\},$$
where $A$ is the constant in heat kernel estimate \ref{estimat}.\\
The purpose of the following lemma is to prove that, if we have a function $f$ satisfied the condition
\begin{equation*}
h^{-1}_{\frac{1}{4a}}f\in L^{1}(\mathbb{K})+L^{\infty}(\mathbb{K}),
\end{equation*}
so $f^{\lambda}$  is in the space $L^{2}([0,+\infty),x^{2\alpha+1}dx)$.
\begin{lemma}\label{lem1}
Let $f$ be a measurable function and $a$ is a positive constant, such that,
$$h^{-1}_{\frac{1}{4a}}f\in L^{1}(\mathbb{K})+L^{\infty}(\mathbb{K})$$
for $\lambda\in S_a$, we have
$$|f^{\lambda}(x)|\leq C e^{-4aAx^2}.$$
where $C$ is a positive constant.
\end{lemma}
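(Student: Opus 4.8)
The plan is to exploit the $L^1+L^\infty$ decomposition together with the pointwise Gaussian domination of the heat kernel supplied by Lemma \ref{estimat}. By hypothesis we may write
$$h^{-1}_{\frac{1}{4a}}f = g_1 + g_2, \qquad g_1\in L^1(\mathbb{K}),\quad g_2\in L^\infty(\mathbb{K}),$$
so that $f = h_{\frac{1}{4a}}g_1 + h_{\frac{1}{4a}}g_2$. Specialising the estimate of Lemma \ref{estimat} to $s=\frac{1}{4a}$ produces a constant $C_0=C_0(a)>0$ with
$$0<h_{\frac{1}{4a}}(x,t)\le C_0\, e^{-4aA(x^2+|t|)},\qquad (x,t)\in\mathbb{K},$$
since $\frac{A}{s}=4aA$ there. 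This is the only place where the explicit value $\frac{1}{4a}$ of the time parameter is used, and it is what manufactures the exponent $4aA$ appearing both in the target bound and in the definition of the strip $S_a$.

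Next I would insert this into the partial Fourier transform in $t$. Writing $\lambda=\xi+i\eta$ with $|\eta|<4aA$ (that is, $\lambda\in S_a$), one has $|e^{-i\lambda t}|=e^{\eta t}\le e^{|\eta|\,|t|}$, so from $f^\lambda(x)=\int_{\mathbb{R}}f(x,t)e^{-i\lambda t}\,dt$ and the triangle inequality,
$$|f^\lambda(x)|\le \int_{\mathbb{R}}h_{\frac{1}{4a}}(x,t)\,\bigl(|g_1(x,t)|+|g_2(x,t)|\bigr)\,e^{|\eta|\,|t|}\,dt.$$
Inserting the kernel bound and factoring out the Gaussian in $x$, each term carries a factor $e^{-4aA x^2}$ multiplied by a $t$-integral of $e^{-(4aA-|\eta|)|t|}$ against the respective $g_i$. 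The decisive gain is that $4aA-|\eta|>0$ precisely because $\lambda\in S_a$, so these net exponentials decay.

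For the $L^\infty$ piece this closes immediately:
$$\int_{\mathbb{R}}h_{\frac{1}{4a}}(x,t)\,|g_2(x,t)|\,e^{|\eta|\,|t|}\,dt\le C_0\|g_2\|_\infty\,e^{-4aA x^2}\int_{\mathbb{R}}e^{-(4aA-|\eta|)|t|}\,dt = \frac{2C_0\|g_2\|_\infty}{4aA-|\eta|}\,e^{-4aA x^2}.$$
For the $L^1$ piece I would bound $e^{-(4aA-|\eta|)|t|}\le 1$ to obtain
$$\int_{\mathbb{R}}h_{\frac{1}{4a}}(x,t)\,|g_1(x,t)|\,e^{|\eta|\,|t|}\,dt\le C_0\,e^{-4aA x^2}\int_{\mathbb{R}}|g_1(x,t)|\,dt.$$
Adding the two contributions yields $|f^\lambda(x)|\le C\,e^{-4aA x^2}$, which is the assertion. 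I expect the main obstacle to be exactly this last step: unlike the $L^\infty$ term, whose $t$-integral is an $x$-independent constant coming for free from the strip condition, the $L^1$ term leaves behind $\int_{\mathbb{R}}|g_1(x,\cdot)|$, and one must argue that this quantity is controlled uniformly in $x$ (or absorb it into $C$) so that the stated pointwise Gaussian bound genuinely holds. This is what ultimately matters downstream, since $|f^\lambda(x)|\le C e^{-4aA x^2}$ together with $\int_0^{\infty} e^{-8aA x^2}x^{2\alpha+1}\,dx<\infty$ is what places $f^\lambda$ in $L^2([0,\infty),x^{2\alpha+1}dx)$. All remaining computations — the elementary $t$-integrals, and the dependence of $C$ on $a$, $\eta$ and $\|g_2\|_\infty$ — are routine.
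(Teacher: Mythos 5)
Your proof is essentially identical to the paper's: the same $L^1+L^\infty$ decomposition $h^{-1}_{\frac{1}{4a}}f=u+v$, the same specialization of the heat kernel estimate of Lemma \ref{estimat} at $s=\frac{1}{4a}$ producing the factor $e^{-4aA(x^2+|t|)}$, and the same use of the strip condition $|\eta|<4aA$ to make the $t$-integrals converge. The one obstacle you flag --- that the $L^1$ piece leaves behind $\int_{\mathbb{R}}|g_1(x,t)|\,dt$, which is finite for a.e.\ $x$ but not uniformly bounded in $x$ --- is present, unacknowledged, in the paper's own proof as well (it silently absorbs this $x$-dependent quantity into the constant $C$), so on that point you are in fact more careful than the source.
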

\begin{proof}
\quad Let $h^{-1}_{\frac{1}{4a}}f\in L^{1}(\mathbb{K})+L^{\infty}(\mathbb{K})$,\\
Then, there are two functions $u\in L^{1}(\mathbb{K})$ and $v\in L^{\infty}(\mathbb{K})$, such that:
$$h^{-1}_{\frac{1}{4a}}(x,t)f(x,t)=u(x,t)+v(x,t),$$
So
$$f(x,t)=h_{\frac{1}{4a}}(x,t)u(x,t)+h_{\frac{1}{4a}}(x,t)v(x,t),$$
by \ref{equ1}, we have
$$f^{\lambda}(x)=\int_{-\infty}^{+\infty}h_{\frac{1}{4a}}(x,t)u(x,t)e^{-i\lambda t}dt + \int_{-\infty}^{+\infty}h_{\frac{1}{4a}}(x,t)v(x,t)e^{-i\lambda t}dt$$

for $\lambda\in \mathbb{C}$ with $(\lambda=\xi+i\eta)$, we have \\
$\displaystyle|\int_{-\infty}^{+\infty}h_{\frac{1}{4a}}(x,t)u(x,t)e^{-i\lambda t}dt|$
\begin{eqnarray*}
&\leq& C\int_{-\infty}^{+\infty}e^{-4aA(x^2+|t|)}|u(x,t)|e^{\eta t}dt\\
&\leq& C e^{-4aAx^2}\left(\int_{-\infty}^{0}e^{4aAt+\eta t}|u(x,t)|dt+\int_{0}^{+\infty}e^{-4aAt+\eta t}|u(x,t)|dt\right)\\
&=& C e^{-4aAx^2}\left(\int_{-\infty}^{0}e^{(4aA+\eta) t}|u(x,t)|dt+\int_{0}^{+\infty}e^{(-4aA+\eta )t}|u(x,t)|dt\right)
\end{eqnarray*}
Then   $\displaystyle|\int_{-\infty}^{+\infty}h_{\frac{1}{4a}}u(x,t)e^{-i\lambda t}dt|<+\infty$ if and only if
            $$-4aA<\eta<4aA $$
So, for $\lambda \in S_a $ we have

\begin{equation}\label{aqu2}
|\int_{-\infty}^{+\infty}h_{\frac{1}{4a}}(x,t)u(x,t)e^{-i\lambda t}dt|\leq Ce^{-4aAx^2},
\end{equation}
and we have\\
$\displaystyle|\int_{-\infty}^{+\infty}h_{\frac{1}{4a}}(x,t)v(x,t)e^{-i\lambda t}dt|$
\begin{eqnarray*}
&\leq& C\int_{-\infty}^{+\infty}e^{-4aA(x^2+|t|)}|v(x,t)|e^{\eta t}dt\\
&\leq&C e^{-4aAx^2}\int_{-\infty}^{+\infty}e^{-4aA|t|}|v(x,t)|e^{\eta t}dt\\
&\leq& C \|v\|_{\infty}e^{-4aAx^2}\left(\int_{-\infty}^{0}e^{4aAt+\eta t}dt+\int_{0}^{+\infty}e^{-4aAt+\eta t}dt\right)
\end{eqnarray*}
 Then $\displaystyle|\int_{-\infty}^{+\infty}h_{\frac{1}{4a}}v(x,t)e^{-i\lambda t}dt|<+\infty$ if and only if~~
            $-4aA<\eta<4aA $,\\
and therefore if $\lambda\in S_{a}$, we have
\begin{equation}\label{aqu3}
|\int_{-\infty}^{+\infty}h_{\frac{1}{4a}}(x,t)v(x,t)e^{-i\lambda t}dt|\leq Ce^{-4aAx^2}
\end{equation}
by the inequalities \ref{aqu2} and \ref{aqu3} we deduce that
$$|f^{\lambda}(x)|\leq C e^{-4aAx^2}.$$
\end{proof}
As in the paper \cite{Huang}, we have,
when the function $f$ satisfies the condition \ref{equa10}, we have the following estimation for the Hankel transform of the function $f$,
\begin{lemma}\label{lem2}
Let $f$ be a measurable function and $a$ is a positive constant, such that
$$h^{-1}_{\frac{1}{4a}}f\in L^{1}(\mathbb{K})+L^{\infty}(\mathbb{K}),$$
for $\lambda\in S_a$, we have,
$$|\mathcal{H}_{\alpha}(f^{\lambda})(z)|\leq Ce^{\frac{(Im(z))^{2}}{4aA}}~~ for~all~ z\in\mathbb{C}. $$
where $C$ is a positive constante.
\end{lemma}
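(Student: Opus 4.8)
The plan is to treat $\mathcal{H}_{\alpha}(f^{\lambda})$ as an analytic continuation in its spectral variable to complex $z$, and to obtain the Gaussian bound by pairing the decay of $f^{\lambda}$ furnished by Lemma~\ref{lem1} against a pointwise exponential bound for the normalized Bessel kernel $J_{\alpha}(xz)/(xz)^{\alpha}$. The first step is to extract that Bessel bound from the integral representation already recorded in the excerpt: writing $w=xz$ with $x>0$ and $z=u+iv\in\mathbb{C}$, one has $|e^{iws}|=e^{-xvs}\le e^{x|v|}$ uniformly for $s\in[-1,1]$, so that
$$\left|\frac{J_{\alpha}(xz)}{(xz)^{\alpha}}\right|\le\frac{2^{-\alpha}}{\sqrt{\pi}\,\Gamma(\alpha+\tfrac12)}\int_{-1}^{1}e^{x|v|}(1-s^2)^{\alpha-\frac12}\,ds=C_{\alpha}\,e^{x|\operatorname{Im}z|}.$$

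Second, I would insert this estimate together with the bound $|f^{\lambda}(x)|\le Ce^{-4aAx^2}$ from Lemma~\ref{lem1} (valid precisely for $\lambda\in S_a$) into the defining integral of the Hankel transform, giving
$$|\mathcal{H}_{\alpha}(f^{\lambda})(z)|\le C\int_{0}^{\infty}e^{-4aAx^2}\,e^{x|\operatorname{Im}z|}\,x^{2\alpha+1}\,dx.$$
The Gaussian decay dominates the exponential factor, so the integral converges absolutely and the complex Hankel transform is well defined; this dominance is also what guarantees $f^{\lambda}\in L^{1}([0,\infty),x^{2\alpha+1}dx)$ in the first place, so that $\mathcal{H}_{\alpha}(f^{\lambda})$ makes sense.

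Third, to land on the exact constant $\tfrac{1}{4aA}$ stated in the lemma, I would first absorb the polynomial weight into part of the Gaussian, using $x^{2\alpha+1}e^{-3aAx^2}\le C$ to get $x^{2\alpha+1}e^{-4aAx^2}\le Ce^{-aAx^2}$, and then complete the square in the remaining exponent,
$$-aAx^2+x|\operatorname{Im}z|=-aA\left(x-\frac{|\operatorname{Im}z|}{2aA}\right)^2+\frac{(\operatorname{Im}z)^2}{4aA}.$$
Pulling the $z$-independent Gaussian integral out as a constant then yields $|\mathcal{H}_{\alpha}(f^{\lambda})(z)|\le Ce^{(\operatorname{Im}z)^2/(4aA)}$, as claimed.

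The routine steps here are the two estimates and the Gaussian integral; the only genuine point requiring care is the interplay of constants, namely how much of the Gaussian is spent absorbing the weight $x^{2\alpha+1}$ versus how much is retained to produce the sharp coefficient $\tfrac{1}{4aA}$. I expect this bookkeeping, rather than any conceptual difficulty, to be the main thing to get right, since convergence and analyticity in $z$ follow immediately from the Gaussian dominance.
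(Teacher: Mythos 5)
Your proof is correct. Note that the paper itself never proves this lemma: it is stated with the phrase ``As in the paper \cite{Huang}'' and the proof is simply deferred to that reference, so your argument supplies exactly what the citation hides, and it is the standard route (and essentially the one used in \cite{Huang}): the Poisson-integral bound $\bigl|J_{\alpha}(xz)/(xz)^{\alpha}\bigr|\le C_{\alpha}e^{x|\mathrm{Im}\,z|}$, the Gaussian decay of $f^{\lambda}$ from Lemma \ref{lem1}, absorption of the weight $x^{2\alpha+1}$, and completion of the square. Two minor remarks: first, the integral representation of $J_{\alpha}$ as printed in the paper contains a typo ($e^{iz}$ should read $e^{izs}$), which your estimate implicitly corrects; second, your particular split of the Gaussian ($3aA$ to absorb the weight, $aA$ retained) lands exactly on the exponent $\frac{1}{4aA}$, whereas retaining more of the Gaussian (using $x^{2\alpha+1}\le C_{\varepsilon}e^{\varepsilon x^{2}}$) would give the stronger exponent $\frac{1}{4(4aA-\varepsilon)}$; since the lemma only claims $\frac{1}{4aA}$, your bookkeeping is sufficient and matches the statement.
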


Our main results is the following theorem:

\begin{theorem}\label{miyachi-theo}
Let $f$ be a measurable function on $\mathbb{K}$  such that
\begin{equation}\label{equa10}
h^{-1}_{\frac{1}{4a}}f\in L^{1}(\mathbb{K})+L^{\infty}(\mathbb{K}),
\end{equation}
and
$$\int_{\mathbb{R}}\log^{+}(\frac{|\mathcal{H}_{\alpha}(f^{\lambda})(y)e^{by^2}|}{\delta})|y|^{2\alpha+1}dy<+\infty.$$
where $\delta$ is positive constant.
where $a,b>0$. \\
 If $ab>\frac{1}{4}$ then,  $f(x,t)=0$ a.e.

\end{theorem}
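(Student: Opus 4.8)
The plan is to fix a real parameter $\lambda$ (note that $\mathbb{R}\subset S_a$, so lemmas \ref{lem1} and \ref{lem2} are available) and to transplant the problem, via the Hankel transform, into a situation governed by Miyachi's lemma \ref{lem5}. Write $g(z)=\mathcal{H}_{\alpha}(f^{\lambda})(z)$. Since $|f^{\lambda}(x)|\leq Ce^{-4aAx^{2}}$ by lemma \ref{lem1}, the defining integral of the Hankel transform converges for every complex argument and defines an entire function; moreover lemma \ref{lem2} supplies the growth bound
$$|g(z)|\leq Ce^{\frac{(Im(z))^{2}}{4aA}},\qquad z\in\mathbb{C}.$$
The key device is then the auxiliary entire function
$$F(z)=\frac{1}{\delta}\,g(z)\,e^{bz^{2}}.$$

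Next I would verify the two hypotheses of lemma \ref{lem5} for $F$. Using $|e^{bz^{2}}|=e^{b((Re(z))^{2}-(Im(z))^{2})}$ together with the bound on $g$,
$$|F(z)|\leq \frac{C}{\delta}\,e^{b(Re(z))^{2}}\,e^{\left(\frac{1}{4aA}-b\right)(Im(z))^{2}}.$$
Here the hypothesis $ab>\tfrac14$ enters decisively: it guarantees (through the constant $A$ of the heat–kernel bound \ref{estimat}) that $b>\frac{1}{4aA}$, so the coefficient $\frac{1}{4aA}-b$ of $(Im(z))^{2}$ is strictly negative, the last factor is $\leq 1$, and
$$|F(z)|\leq \frac{C}{\delta}\,e^{b(Re(z))^{2}},$$
which is exactly the growth hypothesis of lemma \ref{lem5} with $B=b$. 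For the logarithmic condition, on the real axis $F(t)=\delta^{-1}\mathcal{H}_{\alpha}(f^{\lambda})(t)e^{bt^{2}}$, so $\log^{+}|F(t)|=\log^{+}\!\big(\delta^{-1}|\mathcal{H}_{\alpha}(f^{\lambda})(t)e^{bt^{2}}|\big)$, and the second assumption of the theorem is precisely $\int_{\mathbb{R}}\log^{+}|F(t)||t|^{2\alpha+1}dt<+\infty$. Lemma \ref{lem5} then forces $F$ to be a constant, say $F\equiv c_{0}$.

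To finish I would exploit the strictness of $ab>\tfrac14$ a second time, now to show $c_{0}=0$. Evaluating $F$ along the imaginary axis $z=i\eta$ and invoking the same estimate,
$$|c_{0}|=|F(i\eta)|\leq \frac{C}{\delta}\,e^{\left(\frac{1}{4aA}-b\right)\eta^{2}}\longrightarrow 0\quad(\eta\to+\infty),$$
because the exponent is strictly negative. Hence $c_{0}=0$, i.e. $F\equiv 0$ and therefore $g=\mathcal{H}_{\alpha}(f^{\lambda})\equiv 0$. By injectivity of the Hankel transform this gives $f^{\lambda}=0$ a.e.; since $f^{\lambda}$ is the Euclidean Fourier transform of $f(x,\cdot)$ in the variable $t$ (see \ref{equ1}) and the vanishing holds for almost every real $\lambda$, injectivity of the one–dimensional Fourier transform yields $f(x,t)=0$ for a.e. $(x,t)\in\mathbb{K}$.

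The step I expect to be most delicate is the growth estimate packaged in lemma \ref{lem2}, which converts the Gaussian decay of $f^{\lambda}$ into the $(Im(z))^{2}$–type growth of its Hankel transform; everything downstream rests on the exact coefficient $\frac{1}{4aA}$ appearing there, and the whole argument hinges on the bookkeeping that makes $ab>\tfrac14$ equivalent to the strict negativity of $\frac{1}{4aA}-b$ once the normalizing constant $A$ of the heat–kernel estimate is accounted for. By contrast the genuinely clever point is mild: once $F$ is known to be constant, its forced decay along the imaginary axis — available only because the inequality $ab>\tfrac14$ is \emph{strict} — is exactly what upgrades the conclusion from ``$f$ is an extremal Gaussian'' (the borderline case $ab=\tfrac14$) to ``$f=0$''.
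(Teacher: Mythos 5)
Your proposal is correct and follows the same overall strategy as the paper: extend $\mathcal{H}_{\alpha}(f^{\lambda})$ to an entire function, multiply by a Gaussian, apply Miyachi's Lemma \ref{lem5}, and finish with injectivity of $\mathcal{H}_{\alpha}$ and of the Fourier transform in the variable $t$. The differences are in the details, and they are worth recording. The paper works with $g(x)=e^{x^{2}/4a}\mathcal{H}_{\alpha}(f^{\lambda})(x)$; once $g\equiv C$ it gets $\mathcal{H}_{\alpha}(f^{\lambda})(x)=Ce^{-x^{2}/4a}$ and kills $C$ by feeding this back into the logarithmic hypothesis, whose integrand then grows like $(b-\frac{1}{4a})x^{2}|x|^{2\alpha+1}$ unless $C=0$. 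You work instead with $F(z)=\delta^{-1}e^{bz^{2}}\mathcal{H}_{\alpha}(f^{\lambda})(z)$; note that with this choice the logarithmic hypothesis alone \emph{cannot} kill the constant (if $F\equiv c_{0}$, the hypothesis reads $\int_{\mathbb{R}}\log^{+}|c_{0}|\,|t|^{2\alpha+1}dt<\infty$, which only forces $|c_{0}|\le 1$), so your decay argument along the imaginary axis is not a flourish but a necessity --- and it is valid, precisely because the exponent $\frac{1}{4aA}-b$ is strictly negative. Your handling of $\delta$ (absorbing it into $F$) is also cleaner than the paper's, which drops it without comment.

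One caveat, which you half-flag yourself: your verification of the growth hypothesis of Lemma \ref{lem5} rests on the claim that $ab>\frac{1}{4}$ forces $b>\frac{1}{4aA}$. That implication holds only when $A\ge 1$ (more precisely, you need $abA>\frac{1}{4}$), and nothing in the paper guarantees this: Lemma \ref{estimat} asserts only the existence of \emph{some} $A>0$, and such upper bounds survive when $A$ is decreased, not increased. However, this is not a defect relative to the paper: the paper's own proof invokes Lemma \ref{lem5} for $g$ without checking its growth hypothesis at all, and carrying out that check via Lemma \ref{lem2} requires $\frac{1}{4aA}\le\frac{1}{4a}$, i.e.\ again $A\ge 1$. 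So your argument is at least as rigorous as the published one; the weak point --- the bookkeeping of the constant $A$ through Lemmas \ref{estimat}, \ref{lem1} and \ref{lem2} --- is shared with, and inherited from, the paper itself.
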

\begin{proof}

\quad First, let $g$ a function defined by $g(x)=e^{\frac{x^2}{4a}}\mathcal{H}_{\alpha}(f^{\lambda})(x)$\\
$\ast$ If $ab>\frac{1}{4}$ \\
we have\clearpage
$$\displaystyle\int_{-\infty}^{+\infty} log^{+}|g(x)||x|^{2\alpha+1}dx\leq$$
\begin{eqnarray*}&\leq& \int_{-\infty}^{+\infty}log^{+}(|\mathcal{H}_{\alpha}(f^{\lambda})(x)e^{bx^2}|)|x|^{2\alpha+1}dx+\int_{-\infty}^{+\infty}e^{(\frac{1}{4a}-b)x^2}|x|^{2\alpha+1}dx\\
&<& +\infty
\end{eqnarray*}
 As $z\mapsto\mathcal{H}_{\alpha}(f^{\lambda})(z)$ is an entire  function, so $z\mapsto g(z)$ is also entire. For $\lambda\in S_a$ applying the lemma \ref{lem5}, we get  $g$  is a constant.\\
Thus, for $ab>\frac{1}{4}$,
there is a constant $C$ such that
 $$g(x)=C\Leftrightarrow e^{\frac{1}{4a}x^2}\mathcal{H}_{\alpha}(f^{\lambda})(x)=C$$
 then  $$\mathcal{H}_{\alpha}(f^{\lambda})(x)=Ce^{-\frac{1}{4a}x^2}$$
 but in this case, the relation
 \begin{equation}\label{equa5}
   \int_{-\infty}^{+\infty}log^{+}(\frac{|\mathcal{H}_{\alpha}(f^{\lambda})(x)e^{bx^2}|}{\delta})|x|^{2\alpha+1}dx<+\infty
 \end{equation}

 holds only whenever $C=0$.\\
 So $$\mathcal{H}_{\alpha}(f^{\lambda})(x)=0 $$
 implies that, for all $\lambda \in S_a$: $f^{\lambda}(x)=0 $ (because $\mathcal{H}_{\alpha}$ is injective), \\
 The function $f^{\lambda}$ is entire, so we get that
 then $f^{\lambda}=0$   for all $\lambda \in \mathbb{R}$.\\
 then we have $$\int_{-\infty}^{+\infty}f(x,t)e^{-i\lambda t}dt=0$$
 thus $f(x,t)=0$ a.e.\\
This complete the proof of our main result.

\end{proof}

\end{document}